\patchcmd\Gread@eps{\@inputcheck#1 }{\@inputcheck"#1"\relax}{}{}
\newtheorem{theorem}{Theorem}[section]
\newtheorem{proposition}[theorem]{Proposition}
\newtheorem{observation}[theorem]{Observation}
\newtheorem{corollary}[theorem]{Corollary}
\newtheorem{lemma}[theorem]{Lemma}
\newtheorem{definition}[theorem]{Definition}
\newcommand{\qed}{\hfill $\square$\medskip}
\begin{document}

\def\nt{\noindent}

\title{Introduction to double coalitions in graphs}

\author{H. Golmohammadi}


\maketitle

\begin{center}
Novosibirsk State University, Pirogova str. 2, Novosibirsk, 630090, Russia\\ 

\medskip
Sobolev Institute of Mathematics, Ak. Koptyug av. 4, Novosibirsk, 630090, Russia\\

{\tt h.golmohammadi@g.nsu.ru }
\end{center}

\begin{abstract}
 Let $G(V, E)$ be a finite, simple, isolate-free graph. A set $D$ of vertices of a graph $G$ with the vertex set $V$ is a double dominating set of $G$, if every vertex 
 $v\in D$ has at least one neighbor in $D$ and every vertex $v \in V \setminus D$ has at least two neighbors in $D$.
  A double coalition consists of two disjoint sets of vertices $V_{1}$ and $V_{2}$, neither of which is a double dominating set 
  but their union $V_{1}\cup V_{2}$  is a double dominating set.
 A double coalition partition of a graph $G$ is a partition $\Pi = \{V_1, V_2,..., V_k \}$ of $V$ such that no subset of $\Pi$ is a double dominating set of $G$, but for every set $V_i \in \Pi$, there exists a set $V_j \in \Pi$ such that $V_i$ and $V_j$ form a double coalition. In this paper, we study properties of double coalitions in graphs.
\end{abstract}

\noindent{\bf Keywords:}   Coalition; double coalition, double dominating set.
  
\medskip
\noindent{\bf AMS Subj.\ Class.:}  05C60. 


\section{Introduction} 

We only consider finite, simple and undirected graphs $G$ of order $n$ with vertex set $V(G)$. The open neighborhood $N_G(v)$ of a vertex $v$ in $G$ is the set of vertices adjacent to $v$, while the closed
neighborhood of $v$ is the set $N_G[v]=\{v\}\cup N_G(v)$. Each vertex of $N(v)$ is called a neighbor
of $v$, and the cardinality of $|N(v)|$ is called the degree of $v$, denoted by $deg(v)$. An isolated vertex is
a vertex of degree 0.
The minimum and maximum degree of graph vertices are denoted by $\delta(G)$ and $\Delta(G)$, respectively.
Domination is among very popular topics in graph theory. Various aspects of domination have been surveyed in the books \cite{16,17}.
A set $S \subseteq V$ is called a dominating set if every vertex of $ V \setminus S$ is adjacent to at least one vertex in $S$. The domination
number $\gamma(G)$ is the cardinality of the minimum size dominating set in $G$. Harary and Haynes explored the concept of  
double domination in graphs and, more generally, the concept of $k$-tuple domination in \cite{10}. A subset $D$ of $V$ is a $k$-tuple dominating set of
$G$ if for every vertex $v\in V$, $|N[v] \cap D|\ge k$, that is, $v$ is in $D$ and has at least $k-1$ neighbors in $D$ or $v$ is in $V-D$ and has at
least $k$ neighbors in $D$. The $k$-tuple
domination number of $G$, denoted by $\gamma_{\times k}(G)$, is the minimum cardinality among all $k$-tuple dominating sets of $G$. A $k$-tuple dominating set where $k=2$ is called a double dominating set. In such a case, $\gamma_{\times 2}(G)$ indicates the double domination number of graph $G$. We remark that this parameter is also denoted by $dd(G)$. The double domination in graphs has been well researched in \cite{6,7,8,9}. A $k$-tuple domatic partition is a partition
of vertices of a graph into $k$-tuple dominating sets. The maximum cardinality of a $k$-tuple domatic partition is called the $k$-tuple domatic number, denoted by $d_{\times k}(G)$. The $k$-tuple domatic number of a graph was introduced by Harary and Haynes in 1998 \cite{11}, and has studied further in \cite{18}. It can be seen that the case $k=2$ is known as the 2-tuple domatic number, and denoted by $d_{\times 2}(G)$. 

In 2020, Haynes et al. \cite{12} introduced the coalitions in graphs. Let $V_1\subseteq V$ and $V_2\subseteq V$ denote two disjoint subsets of $V$. They form a coalition if none of them are dominating sets, but their union $V_1\cup V_2$ is.
A coalition partition is a vertex partition $\mathcal{P}=\{V_1,V_2,\dots,V_k\}$ of $V$ such that for every $i\in\{1,2,\dots,k\}$ the set $V_i$ is either a dominating set and $|V_i|=1$, or there exists another set $V_j$ so that they form a coalition. The maximum cardinality of a coalition partition is called the {coalition number of the graph, and denoted by $C(G)$.
Coalition and its variations in graphs have been extensively studied in \cite{1,2,3,4,5,13,14,15}. 
In order to develop future research, we propose a new perspective on coalitions involving double dominating sets in graphs.

\medskip 
 We proceed as follows. In Section 2, we introduce the double
 coalitions in graphs and establish some bounds on the double coalition number. In Section 3, we determine the double coalition
number for several classes of graphs.

\section{Existence and preliminary results }

We start this section by proving two definitions.

 \begin{definition}[double coalition]
Two sets $V_1,V_2\subseteq V(G)$ form a double coalition in a graph $G$ if they are not double dominating sets but their union is a double dominating set in $G$. 
 \end{definition} 
 
\begin{definition}[double coalition partition]\label{2.2} 
A double coalition partition of a graph $G$ is a partition $\Pi=\{V_1,V_2,\dots,V_k\}$ of the vertex set $V$ such that any $V_i\in \Pi, 1\leq i \leq k,$ is not a double dominating set but forms a double coalition with another set $V_j \in \Pi$ that is not a double dominating set. The maximum cardinality of a double coalition partition is called the double coalition number of the graph 
and denoted by $DC(G)$.
A $dc$-partition of $G$ having $DC(G)$ subsets is called a $DC(G)$-partition.
\end{definition}

Now we call up the known result that we need for what follows.

\begin{theorem} ~\label{1}\cite{10}
    Every graph $G$ with no isolated has a double dominating set and hence a double domination number.
\end{theorem}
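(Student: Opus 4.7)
The plan is to produce an explicit double dominating set, and the natural candidate is the whole vertex set $V(G)$ itself. First I would unwind the definition given in the introduction: $D \subseteq V$ is a double dominating set exactly when every vertex $v \in V$ satisfies $|N[v] \cap D| \geq 2$. Substituting $D = V(G)$, the condition collapses to $|N[v]| = 1 + \deg(v) \geq 2$ for every $v$, which is equivalent to $\deg(v) \geq 1$. This is precisely the hypothesis that $G$ has no isolated vertices, so $V(G)$ is a double dominating set of $G$.

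Once this is established, the existence of the double domination number is purely formal: the family $\mathcal{F}$ of double dominating sets of $G$ is nonempty (it contains $V(G)$) and is a finite collection of subsets of $V$, hence it contains an element of minimum cardinality, which we define to be $\gamma_{\times 2}(G)$. So the second conclusion follows immediately from the first.

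The only conceptual remark — not really an obstacle — is to check that the no-isolated-vertex assumption is sharp, which explains why it appears in the statement. If $u$ were an isolated vertex, then $N[u] = \{u\}$, and hence $|N[u] \cap D| \leq 1 < 2$ for every $D \subseteq V$, so no double dominating set could exist and $\gamma_{\times 2}(G)$ would be undefined. Thus the hypothesis cannot be weakened, and the short constructive argument above is essentially the whole proof.
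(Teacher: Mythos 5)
Your argument is correct and complete: taking $D=V(G)$ and observing that $|N[v]\cap V(G)|=1+\deg(v)\ge 2$ for every vertex when $G$ is isolate-free is exactly the standard proof of this fact (the paper itself cites it from Harary and Haynes without reproving it). The finiteness argument for the existence of $\gamma_{\times 2}(G)$ and the sharpness remark are both fine.
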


As an immediate outcome of Theorem \ref{1}, we have the following trivial observation.
\begin{observation} 

If a graph $G$ contains an isolated vertex, then $DC(G)=0$.
    
\end{observation}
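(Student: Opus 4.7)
The plan is to argue by contradiction through the contrapositive of the definition: show that the existence of an isolated vertex rules out the existence of any double dominating set in $G$, and hence rules out any double coalition whatsoever, which forces $DC(G) = 0$.

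First I would recall the defining conditions for a double dominating set $D$: every vertex of $D$ has at least one neighbor in $D$, and every vertex of $V \setminus D$ has at least two neighbors in $D$. Let $v$ be an isolated vertex of $G$, so $N_G(v) = \emptyset$. I would consider the two possible cases for the membership of $v$ in a hypothetical double dominating set $D \subseteq V(G)$. If $v \in D$, then $v$ must have at least one neighbor in $D$, contradicting $N_G(v)=\emptyset$. If $v \notin D$, then $v$ must have at least two neighbors in $D$, again contradicting $N_G(v)=\emptyset$. Therefore no double dominating set of $G$ exists; this is essentially the contrapositive of Theorem \ref{1}, so I could also simply cite that result.

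Next, I would use this to conclude that no two disjoint subsets $V_1, V_2 \subseteq V(G)$ can form a double coalition. Indeed, the definition of a double coalition requires $V_1 \cup V_2$ to be a double dominating set; since no such set exists, this requirement can never be met. In particular, for any partition $\Pi = \{V_1, \ldots, V_k\}$ of $V(G)$, no two parts $V_i, V_j$ form a double coalition. Thus the defining property of a double coalition partition in Definition \ref{2.2} cannot be satisfied by any partition of $V(G)$.

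The conclusion is then that $G$ admits no double coalition partition, and following the convention implicit in the definition of $DC(G)$ (the maximum cardinality over an empty family is taken to be $0$), we obtain $DC(G)=0$. No step of the argument is delicate; the only subtlety is making explicit that ``no double dominating set exists'' is strictly stronger than ``the whole vertex set fails to be double dominating,'' and ensuring that this stronger statement is precisely what forbids every pairwise union in a partition from being double dominating.
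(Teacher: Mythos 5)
Your argument is correct and matches the paper's intent: the paper gives no explicit proof, presenting the observation as an immediate consequence of Theorem \ref{1}, and your case analysis on whether the isolated vertex lies in a hypothetical double dominating set is exactly the routine verification being left implicit. Nothing is missing; you have simply written out the details the paper omits.
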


 The primary objective of this section is to show that an isolate-free graph $G$ has a $dc$-partition. 

\begin{theorem}\label{e1}
Every graph $G$ with no isolated vertex has a $dc$-partition.

\end{theorem}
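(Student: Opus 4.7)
The plan is to exhibit an explicit $dc$-partition of $G$, uniformly across all isolate-free graphs, anchored at a single minimum-degree vertex.

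I would let $v$ be a vertex of minimum degree $\delta=\delta(G)$ with neighborhood $N(v)=\{u_1,\dots,u_\delta\}$, and set $S=\{v,u_1,\dots,u_{\delta-1}\}$ (so $|S|=\delta\le n-1$, whence $V\setminus S$ is nonempty because it contains $u_\delta$). The proposed partition is
\[
\Pi \;=\; \bigl\{\{v\},\,\{u_1\},\,\ldots,\,\{u_{\delta-1}\},\,V\setminus S\bigr\}.
\]
Each singleton is not a double dominating set because its lone vertex has no neighbor inside. The remainder $R:=V\setminus S$ is not a DDS either: $v\notin R$ has exactly one neighbor in $R$ (namely $u_\delta$), so $v$ fails to be doubly dominated by $R$.

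The crux is verifying the coalition condition. The workhorse is the following degree-counting claim: if $T\subseteq V$ with $|T|\le\delta-1$, then $V\setminus T$ is a DDS. Indeed every $w\in V\setminus T$ has $\deg(w)\ge\delta$ neighbors in $G$, of which at most $|T|\le\delta-1$ lie in $T$, leaving at least one in $V\setminus T$; and every $w\in T$ has at most $|T|-1\le\delta-2$ of its $\ge\delta$ neighbors inside $T$, leaving at least two in $V\setminus T$. Applying the claim with $T=S\setminus\{x\}$ (of size $\delta-1$) for each $x\in S$ shows that the union $\{x\}\cup R=V\setminus(S\setminus\{x\})$ is a DDS. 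Hence $\{x\}$ and $R$ form a double coalition, and consequently $R$ also has every $\{x\}$ as a coalition partner.

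The main obstacle I anticipate is not a deep one but rather the corner case $\delta=1$: then the index range $\{u_1,\dots,u_{\delta-1}\}$ is empty and the partition degenerates to $\{\{v\},V\setminus\{v\}\}$. One verifies separately that $V\setminus\{v\}$ is not a DDS (since $v$ has only one neighbor in it) and that $V$ itself is a DDS by Theorem \ref{1} (applied with $T=\emptyset$ in the auxiliary claim), so the two parts form a double coalition. Beyond that, the only remaining point is confirming $V\setminus S\neq\emptyset$, which follows from $|S|=\delta\le n-1$.
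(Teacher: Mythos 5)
Your proof is correct, and it takes a genuinely different route from the paper. The paper starts from a maximum double domatic partition $\mathcal{D}=\{D_1,\dots,D_k\}$ with $k=d_{\times 2}(G)$, replaces each class by a minimal double dominating set, splits each minimal class into two nonempty halves (neither of which can be a DDS, by minimality, while their union is), and then carefully re-absorbs the leftover vertices of the last class; this yields the quantitative Corollary $DC(G)\geq 2d_{\times 2}(G)$. You instead anchor everything at one minimum-degree vertex $v$ and use the elementary counting fact that deleting at most $\delta(G)-1$ vertices from $V$ leaves a double dominating set; this produces an explicit partition $\{\{v\},\{u_1\},\dots,\{u_{\delta-1}\},V\setminus S\}$ in which every singleton pairs with the remainder $R$, and all verifications (including the degenerate case $\delta=1$, where the partition is $\{\{v\},V\setminus\{v\}\}$) go through as you state. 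Your construction is shorter, avoids the existence of a maximum double domatic partition and the minimality reductions entirely, and gives the bound $DC(G)\geq \delta(G)+1$; since every class of a double domatic partition must meet each closed neighborhood $N[w]$ in at least two vertices, one has $d_{\times 2}(G)\leq \lfloor(\delta(G)+1)/2\rfloor$, so your bound is in fact never weaker than the paper's $2d_{\times 2}(G)$, and it is tight for $K_n$. What the paper's approach buys in exchange is the statement of its lower bound directly in terms of the double domatic number, tying the new invariant to that existing parameter.
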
 

\begin{proof} 
	 Let $G$ has a double domatic partition $\mathcal{D}=\{D_1, D_2,\ldots, D_k\}$ with $d_{\times 2}(G)=k$. It is clear that $|D_i|\geq 2$. In the following, we demonstrate the process of constructing a $dc$-partition $\Pi$ of $G$. For any integer $1\leq i\leq k-1$, if $D_i$ is not a minimal double dominating set, then there is a set $D'_i\subset D_i$ which is a minimal double dominating set. So, we swap $D_i$ by $D'_i$ in $\cal D$, and add $D_i\setminus D'_i$ to $D_k$. Therefore, we may assume that all sets $D_i\in {\cal D}$ with $1\leq i\leq k-1$ are minimal double dominating sets of $G$. Now, we establish a $dc$-partition $\Pi$ for $G$. We begin with $\Pi=\emptyset$. Now, we divide each set $D_i\in {\cal D}$ with  $1\leq i\leq k-1$ into two nonempty sets $D^1_i$ and $D^2_i$, and then, we add $D^1_i$ and $D^2_i$ to $\Pi$. It is clear that neither $D^1_i$ nor $D^2_i$ is a double dominating set but $D^1_i\cup D^2_i$ is a double dominating set. Then, clearly $|\Pi|\geq 2k-2$. Next, consider the set $D_k\in {\cal D}$. If $D_k$ is a minimal double dominating set, then we split $D_k$ into two nonempty sets $D^1_k$ and $D^2_k$, and then, we add $D^1_k$ and $D^2_k$ to $\Pi$. So, this creates a $dc$-partition $\Pi$ in $G$ with $|\Pi|=2k$. Next, suppose that $D_k$ is not a minimal double dominating set. Then, there is a set $D'_k\subset D_k$ which is a minimal double dominating set. Now, we split $D'_k$ into two sets $D'^1_k$ and $D'^2_k$, and append them to $\Pi$. Now, consider the set $D''_k:=D_k\setminus D'_k$. Since $\cal D$ is a double domatic partition of $G$ with the maximum cardinality $d_{\times 2}(G)=k$, the set $D''_k$ is not a double dominating set. If $D''_k$ forms a double coalition with one of the sets of $\Pi$, then we add $D''_k$ to $\Pi$, and therefore $|\Pi|=2k+1$. If $D''_k$ does not form a double coalition with any set of $\Pi$, then we remove $D'^2_k$ from $\Pi$ and append $D'^2_k\cup D''_k$ to $\Pi$, and  consequently $|\Pi|=2k$, which completes the proof. \qed

 \end{proof}

The proof of Theorem \ref{e1} allows us to deduce the following corollary.

\begin{corollary}
   If a graph $G$ has $\delta(G)\geq 1$, then $DC(G)\geq 2d_{\times 2}(G)$.
\end{corollary}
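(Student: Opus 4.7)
The plan is to recognize that the bound is already implicit in the construction used to prove Theorem~\ref{e1}, so the proof essentially amounts to reading off cardinalities rather than inventing new machinery. Since $\delta(G)\geq 1$, Theorem~\ref{1} ensures the existence of a double dominating set, hence of a double domatic partition $\mathcal{D}=\{D_1,\dots,D_k\}$ with $k=d_{\times 2}(G)\geq 1$; this is the starting object.

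I would then replay the construction of Theorem~\ref{e1} in a slightly compressed form. After replacing each non-minimal $D_i$ ($1\le i\le k-1$) by a minimal double dominating subset and dumping leftovers into $D_k$, every $D_i$ with $1\le i\le k-1$ is minimal. Splitting each such $D_i$ into two nonempty pieces $D_i^1, D_i^2$ already puts $2(k-1)=2k-2$ sets into $\Pi$, each of which fails to be a double dominating set (by minimality of $D_i$) while $D_i^1\cup D_i^2=D_i$ is double dominating. Thus each pair forms a double coalition.

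For the last block $D_k$, I would repeat the case analysis from Theorem~\ref{e1}: either $D_k$ is itself minimal and splits into two additional coalition-partners, giving $|\Pi|=2k$; or we extract a minimal $D'_k\subset D_k$, split it into two pieces, and handle the remainder $D''_k=D_k\setminus D'_k$ either by appending it (yielding $|\Pi|=2k+1$) or by merging it back into one of the two pieces (yielding $|\Pi|=2k$). In every branch we reach $|\Pi|\geq 2k$.

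There is no real obstacle here; the only subtlety worth double-checking is that the construction does produce a legitimate $dc$-partition, i.e., that after the merging step at the end no set of $\Pi$ is left without a double coalition partner, but this is exactly what Theorem~\ref{e1} already verifies. Hence $DC(G)\geq |\Pi|\geq 2k=2d_{\times 2}(G)$, which is the desired inequality.
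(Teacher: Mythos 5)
Your proposal is correct and matches the paper's reasoning exactly: the paper derives this corollary by observing that every branch of the construction in Theorem~\ref{e1} yields a $dc$-partition of cardinality at least $2k$ with $k=d_{\times 2}(G)$, which is precisely what you do. No further comment is needed.
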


Harary and Haynes \cite{10} mentioned that for all graphs $G$ with $\delta(G)\geq k-1$, $d_{\times k}(G)\geq 1$. So, it is obvious that for $k=2$, $d_{\times 2}(G)\geq 1$.\medskip

Hence as a consequence of the statement above and Theorem \ref{e1}, we derive the following result.

\begin{corollary}~\label{cor1}
If $G$ is a graph of order $n$ with $\delta(G)\geq 1$, then $2\leq DC(G)\leq n$.
\end{corollary}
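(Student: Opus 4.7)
The plan is to combine three ingredients already assembled in the paper. First, by Theorem~\ref{e1}, since $\delta(G)\geq 1$ guarantees $G$ is isolate-free, $G$ admits at least one $dc$-partition, so $DC(G)$ is well defined. Second, by the Harary--Haynes observation recalled just above the corollary, $\delta(G)\geq 1=2-1$ implies $d_{\times 2}(G)\geq 1$. Third, the preceding corollary gives $DC(G)\geq 2\,d_{\times 2}(G)$. Chaining these yields
\[
DC(G)\;\geq\;2\,d_{\times 2}(G)\;\geq\;2,
\]
which settles the lower bound.

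For the upper bound I would simply invoke the definition of a double coalition partition: $\Pi=\{V_1,\dots,V_k\}$ is a partition of $V(G)$, hence each $V_i$ is nonempty and the $V_i$'s are pairwise disjoint. Since $|V(G)|=n$, it follows that $k\leq n$; taking the maximum over all $dc$-partitions gives $DC(G)\leq n$.

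There is essentially no obstacle here: the statement is a direct chaining of prior results. The only thing worth being careful about is to explicitly point out why $DC(G)$ is even defined (this requires the isolate-free hypothesis together with Theorem~\ref{e1}), and to note that the existence of a $dc$-partition, not merely of a double dominating set, is what justifies writing the inequality $DC(G)\geq 2$. Once that is observed, the proof is a one-line argument in each direction.
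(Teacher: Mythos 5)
Your proof is correct and matches the paper's own (implicit) argument: the paper derives the lower bound exactly as you do, by chaining Theorem~\ref{e1}, the Harary--Haynes fact $d_{\times 2}(G)\geq 1$, and the corollary $DC(G)\geq 2d_{\times 2}(G)$, with the upper bound following trivially from $\Pi$ being a partition of the $n$ vertices. Your added care about why $DC(G)$ is well defined is a reasonable touch but does not change the route.
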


The upper bound of Corollary~\ref{cor1} is achieved  by complete graphs $K_n$.\medskip

We next present a technical key lemma, which permits us to estimate the number of double coalitions involving any set in a $DC(G)$-partition of $G$. 

\begin{lemma}\label{thm:Delta1}
	If $G$ is an isolate-free graph with maximum degree $\Delta(G)$, then for any $DC(G)$-partition $\Pi$ and for any $X\in \Pi$, the number of double coalitions formed by $X$ is at most  $\Delta(G)$.	    
 
\end{lemma}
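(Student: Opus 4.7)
The plan is to exploit the fact that any $X$ in a $DC(G)$-partition $\Pi$ fails to be a double dominating set, so there must exist a ``witness'' vertex $v \in V$ with $|N[v] \cap X| \leq 1$. Any coalition partner $Y \in \Pi$ of $X$ must remedy this deficiency: it must satisfy $|N[v] \cap (X \cup Y)| \geq 2$, i.e. $|N[v] \cap Y| \geq 2 - |N[v] \cap X|$. Since the sets of $\Pi$ are pairwise disjoint, the various $Y$'s compete for elements of $N[v]$, and this rationing will yield the $\Delta(G)$ bound.

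First I would fix such a witness $v$ and split into two cases by $|N[v] \cap X| \in \{0,1\}$. In the case $|N[v] \cap X| = 1$, every coalition partner $Y$ of $X$ must contain at least one element of $N[v]$, and since $X \cap Y = \emptyset$, this element lies in $N[v] \setminus X$, a set of size $\deg(v) \leq \Delta(G)$. As the coalition partners are pairwise disjoint subsets of $V \setminus X$, there are at most $\deg(v) \leq \Delta(G)$ of them. In the case $|N[v] \cap X| = 0$, every coalition partner $Y$ must contribute two elements of $N[v]$ to the union, so by the same disjointness argument their number is at most $\lfloor |N[v]|/2 \rfloor = \lfloor(\deg(v)+1)/2\rfloor$, which is $\leq \Delta(G)$ because $\Delta(G) \geq 1$ for any isolate-free graph.

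The main obstacle, and where one has to be a little careful, is the case $|N[v] \cap X| = 0$: one must verify that each coalition partner $Y$ genuinely contributes two elements of $N[v]$, regardless of whether $v$ itself lies in $Y$. If $v \in Y$, then double-domination of $v$ by $X \cup Y$ requires a neighbor of $v$ in $X \cup Y$; since $N(v) \cap X = \emptyset$, this neighbor lies in $Y$, so $Y$ contains both $v$ and a neighbor of $v$, i.e.\ two elements of $N[v]$. If $v \notin Y$, then $v$ must have two neighbors in $X \cup Y$, and again, since $N(v) \cap X = \emptyset$, both lie in $Y \cap N(v)$. Either way, $|N[v] \cap Y| \geq 2$, as needed.

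Finally, I would combine the two cases. Since $\lfloor(\Delta(G)+1)/2\rfloor \leq \Delta(G)$ whenever $\Delta(G) \geq 1$, both cases yield the uniform upper bound $\Delta(G)$ on the number of coalition partners of $X$. I do not expect any significant technical difficulty beyond the careful bookkeeping in the witness-vertex case analysis; the result follows once the disjointness of the parts of $\Pi$ is exploited against the deficiency count of $X$.
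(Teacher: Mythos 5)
Your proposal is correct and follows essentially the same strategy as the paper: fix a witness vertex $v$ with $|N[v]\cap X|\le 1$, observe that any coalition partner must supply the missing elements of $N[v]$, and use the pairwise disjointness of the parts of $\Pi$ to bound the number of partners by $\deg(v)\le\Delta(G)$ (or by half of $|N[v]|$ in the deficiency-two case). If anything, your treatment of the case $|N[v]\cap X|=0$ is slightly more careful than the paper's, since you explicitly verify that a partner containing $v$ itself still contributes two elements of $N[v]$.
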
 
\begin{proof} Since $X\in\Pi$, $X$ is not a double dominating set. So, there is a vertex $w$ such that $|N[w] \cap X|\leq 1$. We now distinguish two cases.

\nt {\bf Case 1.} Let $|N[w] \cap X|=1$. We first assume that $w\in X$. If a set $A\in \Pi$ forms a double coalition with $X$, then  $A\cup X$ is a double dominating set of $G$.
Since $X\cap N(w)=\emptyset$, we must have $A\cap N(w)\neq\emptyset$. Thus, there are at most $|N(w)|$ sets such as $A$ that can form a double coalition with $X$, and consequently, $X$ is in at most $\Delta(G)$ double coalitions. Now, suppose that $w\not\in X$ and $X$ contains exactly one of the members of $N(w)$. We may assume that there is a set $U\in \Pi$ such that $w\in U$. Without loss of generality, let $U$ and $X$ do not form a double coalition, but each of $Y_1, Y_2,\ldots, Y_k$ for $1\leq i\leq k$ is in a double coalition with $X$. Therefore, we must have $Y_i\cap N(w)\ne \emptyset$ for $1\leq i\leq k$ such that $k\leq |N(w)|-1$. Hence, there at most $|N(w)|-1\leq \Delta(G)-1$ sets that can be in a double coalition with $X$. Note that in the case $U$ and $X$ form a double coalition, $X$ is in at most $\Delta(G)$ double coalitions.

\nt {\bf Case 2.} Let $|N[w] \cap X|=0$. It follows that $w\not\in X$ and $X\cap N(w)=\emptyset$. We may assume that $U$ and $X$ do not form a double coalition, but each of
$Y_1, Y_2,\ldots, Y_k$ for $1\leq i\leq k$ is in a double coalition with $X$. So, we must have $Y_i\cap N(w)\ne \emptyset$ for $1\leq i\leq k$ such that $k\leq \frac{|N(w)|}{2}$. Thus, there are at most $\frac{|N(w)|}{2} \leq \frac{\Delta(G)}{2}$ sets that can be in a double coalition with $X$. 

According to the two cases above, we conclude that $X$ is in at most $\Delta(G)$ double coalitions.\qed

\end{proof}

    We establish next an upper bound on the double coalition in terms of the maximum degree of $G$.

\begin{theorem}\label{thm:Delta2}
	Let $G$ be an isolate-free graph wit $\delta(G)=1$. Then, 	$DC(G) \leq \Delta(G)+1$.
\end{theorem}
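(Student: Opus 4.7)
The plan is to exploit the leaf vertex that $\delta(G)=1$ guarantees and combine this structural observation with Lemma~\ref{thm:Delta1}. First I would fix a vertex $v$ with $\deg(v)=1$ and let $u$ be its unique neighbor. The key preliminary remark is that every double dominating set $D$ of $G$ must contain both $v$ and $u$: since $v$ has only one neighbor, there is no way for $v \in V\setminus D$ to have two neighbors in $D$, so $v \in D$; and then because every vertex in $D$ needs a neighbor in $D$, we must also have $u \in D$.

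Now let $\Pi = \{V_1, \ldots, V_k\}$ be a $DC(G)$-partition, so $k = DC(G)$. I would split into two cases depending on whether $v$ and $u$ lie in the same class of $\Pi$. Suppose first that $v \in V_a$ and $u \in V_b$ with $a \neq b$. For any class $V_\ell \notin \{V_a, V_b\}$, its double coalition partner $V_m$ would have to satisfy $v, u \in V_\ell \cup V_m$, but no single class contains both $v$ and $u$; so no such $V_\ell$ can exist, forcing $k \le 2 \le \Delta(G)+1$ (using $\Delta(G) \ge \delta(G) = 1$).

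The main case is when $v$ and $u$ lie in the same class, say $V_1$. Here I would argue that every other class $V_i$ (for $2 \le i \le k$) can only form a double coalition with $V_1$: indeed, $V_i$ contains neither $v$ nor $u$, and the double dominating set witnessing a coalition must contain both $v$ and $u$, so its partner has to be the unique class containing $v$, namely $V_1$. Thus $V_1$ participates in at least $k-1$ distinct double coalitions. Since $V_1$ itself is not a double dominating set (by the definition of a $dc$-partition), Lemma~\ref{thm:Delta1} applies to $V_1$ and gives $k - 1 \le \Delta(G)$, i.e.\ $DC(G) \le \Delta(G)+1$.

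I do not expect a serious obstacle here; the only subtle point is to make sure that $V_1$ qualifies as a set to which Lemma~\ref{thm:Delta1} can be applied, which is immediate because every member of a $dc$-partition is by definition not a double dominating set. The rest is bookkeeping about where $v$ and $u$ sit in the partition, driven entirely by the forced membership observation $\{v,u\} \subseteq D$ for every double dominating set $D$.
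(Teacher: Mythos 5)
Your proof is correct and follows essentially the same route as the paper: both isolate the degree-one vertex, split into cases according to whether its unique neighbor lies in the same class of the partition, show that every class must form its double coalition with the class containing the leaf, and then invoke Lemma~\ref{thm:Delta1} to bound the number of such coalitions by $\Delta(G)$. Your explicit observation that every double dominating set must contain both $v$ and $u$ is a slightly cleaner packaging of the same idea (and even yields the stronger bound $k\le 2$ in the split case), but the argument is the paper's.
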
 
\begin{proof} 
Let $x$ be a vertex of $G$ with $\deg(x)=1$. Suppose that $\Pi$ be a $DC$-partition of $G$. Let $X \in \Pi$ such that $x\in X$. If $N(x)\subseteq X$, then any set of $\Pi \setminus X$ must form a double coalition only with $X$. So, by Lemma~\ref{thm:Delta1}, $DC(G)\leq \Delta(G)+1$. Now, assume that $N(x)\nsubseteq X$. Let $Y\neq X$ and $Z\neq X$ be two sets of $\Pi$. If $Y$ and $Z$ forms a double coalition, then $Y\cup Z$ is a double dominating set. Since $x\not\in Y\cup Z$, $x$ must have at least two neighbors in $Y\cup Z$, then we reach a contradiction because $\deg(x)=1$. Therefore, every set of $\Pi$ must only form a double coalition with $X$. Hence, by Lemma~\ref{thm:Delta1}, $DC(G)\leq \Delta(G)+1$, as desired. \qed
\end{proof}

Applying Theorem \ref{thm:Delta2} to the class of trees, we get the following result.

\begin{corollary}
If $T$ is a tree of order $n$, then $DC(T) \leq \Delta(T)+1$.

\end{corollary}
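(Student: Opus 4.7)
The plan is to observe that the corollary is essentially a direct specialization of Theorem~\ref{thm:Delta2}, modulo a trivial boundary case. The only hypothesis of that theorem is that $G$ is isolate-free with $\delta(G)=1$, so I would verify that every tree relevant to the statement meets this hypothesis and then quote the theorem.

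First I would dispose of the degenerate case $n=1$: then $T$ consists of a single isolated vertex, so by the Observation after Theorem~\ref{1} we have $DC(T)=0$, while $\Delta(T)+1=1$, and the inequality holds trivially. Second, for $n\ge 2$, I would invoke the standard fact that every nontrivial tree contains at least two leaves (for instance, because an acyclic graph with $n\ge 2$ vertices and $n-1$ edges must have a vertex of degree at most $1$, and any nontrivial tree has no isolated vertices). This immediately gives $\delta(T)=1$ and $T$ is isolate-free. Therefore Theorem~\ref{thm:Delta2} applies and yields $DC(T)\le \Delta(T)+1$.

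Since the two ingredients (the existence of a leaf in a nontrivial tree, and the already-proved Theorem~\ref{thm:Delta2}) are routine, there is no real obstacle here; the proof is essentially a one-line application of Theorem~\ref{thm:Delta2}. If one wanted a slightly more self-contained presentation, one could simply remark that a leaf $x$ of $T$ exists, place $x$ in its part $X$ of a $DC(T)$-partition $\Pi$, and then repeat the case analysis of Theorem~\ref{thm:Delta2} verbatim; but given that the theorem is already in hand, the cleanest approach is just to cite it.
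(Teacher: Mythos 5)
Your proposal is correct and matches the paper's approach exactly: the paper also obtains this corollary as a direct application of Theorem~\ref{thm:Delta2}, using the fact that a nontrivial tree has a leaf and hence $\delta(T)=1$. Your extra care with the $n=1$ case is a harmless (and arguably welcome) refinement that the paper glosses over.
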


\section{Double coalition number for special graphs }

In this section, we deal with the problem of obtaining the exact value of the double coalition number. We first recall the following fact.

\begin{observation}\label{3} \cite{10}
    For $r, s \geq 3$, we have $\gamma_{\times 2}(K_{r,s})=4$.

\end{observation}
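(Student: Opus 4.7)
The plan is to establish the equality $\gamma_{\times 2}(K_{r,s})=4$ by producing a matching upper and lower bound. Let $A$ and $B$ denote the two parts of $K_{r,s}$ with $|A|=r$ and $|B|=s$, and for any candidate set $D$ write $D_A = D\cap A$ and $D_B = D\cap B$. The key structural fact I would use throughout is that for any $a\in A$, $N[a]=\{a\}\cup B$, and for any $b\in B$, $N[b]=\{b\}\cup A$. In particular, $|N[a]\cap D|=[a\in D]+|D_B|$ and $|N[b]\cap D|=[b\in D]+|D_A|$. Recall that $D$ is a double dominating set iff $|N[v]\cap D|\geq 2$ for every vertex $v$.

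For the upper bound, I would simply exhibit a set: pick any two vertices from $A$ and any two from $B$, and call this set $D$. Then $|D_A|=|D_B|=2$, so for every $a\in A$ we have $|N[a]\cap D|\geq |D_B|=2$, and symmetrically for every $b\in B$. Hence $D$ is a double dominating set of size $4$, giving $\gamma_{\times 2}(K_{r,s})\leq 4$.

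For the lower bound, I would split into cases according to whether $A\subseteq D$ and $B\subseteq D$. \textbf{Case 1:} $A\not\subseteq D$ and $B\not\subseteq D$. Pick $a\in A\setminus D$ and $b\in B\setminus D$. From $|N[a]\cap D|\geq 2$ we get $|D_B|\geq 2$, and from $|N[b]\cap D|\geq 2$ we get $|D_A|\geq 2$, so $|D|\geq 4$. \textbf{Case 2:} $A\subseteq D$ (the case $B\subseteq D$ is symmetric). Then $|D_A|=r\geq 3$. Moreover, for any $a\in A\subseteq D$, the double domination condition forces $|N[a]\cap D|=1+|D_B|\geq 2$, hence $|D_B|\geq 1$. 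Therefore $|D|\geq r+1\geq 4$.

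Combining both directions gives $\gamma_{\times 2}(K_{r,s})=4$. The argument is essentially a clean case analysis, and the only step where one must be careful is Case 2 of the lower bound: it is tempting to bound $|D|$ just by $|D_A|+|D_B|\geq 2+2$, but when $A\subseteq D$ the argument producing $|D_A|\geq 2$ from an uncovered vertex of $A$ is unavailable, and one must instead use $|A|\geq 3$ together with the requirement that at least one vertex of $B$ be present to double dominate any vertex of $A$. Apart from this, the proof is routine.
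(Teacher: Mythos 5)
Your proof is correct. Note, however, that the paper does not prove this statement at all: it is recorded as an Observation cited from Harary and Haynes \cite{10}, so there is no in-paper argument to compare against. Your self-contained case analysis is sound: the upper bound via two vertices from each part is immediate, and the lower bound correctly splits on whether each part is entirely contained in $D$. Your remark at the end identifies exactly the right subtlety --- in the case $A\subseteq D$ one cannot extract $|D_A|\geq 2$ from an uncovered vertex of $A$, and the bound $|D|\geq r+1\geq 4$ is precisely where the hypothesis $r,s\geq 3$ is used (indeed the statement fails for $K_{2,2}=C_4$, where $\gamma_{\times 2}=3$, consistent with Observation 3.5 of the paper). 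So your argument is a complete and correct substitute for the external citation.
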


The following proposition gives us the double coalition number of complete bipartite
graph.

\begin{proposition}
If $G=K_{r,s}$ is a complete bipartite graph and $r\geq s \geq 3$, then $DC(K_{r,s})=r+s-2=n-2$.
\end{proposition}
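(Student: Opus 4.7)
My plan is to prove $DC(K_{r,s})=r+s-2$ by establishing matching upper and lower bounds.

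For the upper bound $DC(K_{r,s})\le n-2$, I would argue by contradiction. Suppose $\Pi$ is a DC-partition of $K_{r,s}$ with $|\Pi|\ge n-1$. Then $\Pi$ must consist either of $n$ singletons (when $|\Pi|=n$), or of one pair together with $n-2$ singletons (when $|\Pi|=n-1$). In either case, the union of any two parts of $\Pi$ has at most three vertices. By Observation~\ref{3}, $\gamma_{\times 2}(K_{r,s})=4$, so every double dominating set has at least four vertices. Hence no two parts of $\Pi$ can form a double coalition, contradicting the defining property of a DC-partition.

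For the lower bound $DC(K_{r,s})\ge n-2$, I would exhibit an explicit DC-partition of size $r+s-2$. Writing the partite sets as $A=\{a_1,\dots,a_r\}$ and $B=\{b_1,\dots,b_s\}$, I would take
\[
\Pi \;=\; \bigl\{\{a_1,b_1,b_2\}\bigr\}\,\cup\, \bigl\{\{a_i\} : 2\le i\le r\bigr\}\,\cup\, \bigl\{\{b_j\} : 3\le j\le s\bigr\},
\]
which has $1+(r-1)+(s-2)=r+s-2$ parts. The triple $\{a_1,b_1,b_2\}$ is not double dominating, because any $b\in B$ outside the triple has only $a_1$ as a neighbor in it; the singletons are trivially not DD. The triple forms a double coalition with each singleton $\{a_i\}$ for $i\ge 2$, since the union $\{a_1,a_i,b_1,b_2\}$ contains two vertices from each partite class and is readily verified to be double dominating.

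The step I expect to be the main obstacle is verifying a coalition partner for each singleton $\{b_j\}$ with $j\ge 3$. The natural candidate is again the triple, whose union with $\{b_j\}$ is $D=\{a_1,b_1,b_2,b_j\}$. Showing that $D$ is double dominating amounts to proving that every vertex of $B\setminus D$ has at least two neighbors in $D$, which is delicate since $A\cap D=\{a_1\}$. If the direct verification does not go through for the full range of parameters, I would adjust the construction---for example by merging surplus singletons of $B$ into a pair such as $\{b_{s-1},b_s\}$, or by inserting a second ``\textit{$A$-heavy}'' hub like $\{a_2,a_3,b_3\}$---in a way that preserves both the total count $r+s-2$ and the existence of a coalition partner for every part.
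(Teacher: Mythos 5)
Your upper-bound argument is correct and is exactly the paper's: with $n$ or $n-1$ parts, every union of two parts has at most $3<4=\gamma_{\times 2}(K_{r,s})$ vertices, so no double coalition can form. The lower-bound construction is also essentially the paper's own (the paper's hub $\{v_1,v_2,u_1\}$ is the mirror image of your $\{a_1,b_1,b_2\}$), and the step you single out as the main obstacle is exactly where both break down: for $s\ge 4$ the set $D=\{a_1,b_1,b_2,b_j\}$ is \emph{not} double dominating, since $|D\cap B|=3<s$ leaves some $b\in B\setminus D$ with $N(b)\cap D=\{a_1\}$, a single neighbor. So for $s\ge 4$ the singletons $\{b_j\}$ have no coalition partner at all (their union with any other singleton has only two vertices), and your $\Pi$ is not a $dc$-partition.

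Worse, no adjustment of the kind you sketch can succeed, because the statement itself fails for $r\ge s\ge 4$. A partition of the $n$ vertices into $n-2$ nonempty parts has size profile either two pairs plus singletons or one triple plus singletons. In the first case a singleton's union with any other part has at most three vertices, so the singletons (which exist, since $n\ge 8$) have no partner. In the second case each singleton must partner with the triple, and the resulting $4$-set must be double dominating; when $r,s\ge 4$ every $4$-vertex double dominating set of $K_{r,s}$ meets each partite set in exactly two vertices, so the triple meets one side twice and the other once, and only singletons from the deficient side can be its partners --- yet both sides contribute singletons. Hence $DC(K_{r,s})\le n-3$ for $r\ge s\ge 4$, and the proposition (together with the paper's proof, which has the identical defect) is valid only for $s=3$, where $B\subseteq\{a_1,b_1,b_2,b_3\}$ and the verification you were worried about does go through.
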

\begin{proof} Let $G=K_{r,s}$ be a complete bipartite graph with two partite sets $A=\{v_1,v_2, \dots,v_s\}$ and $B=\{u_1,u_2,\dots,u_r\}$  such that $|A|\leq|B|$. One can observe that the vertex partition  $\Pi=\{{\{v_1,v_2,u_{1}\},\{v_3\},\dots,\{v_{s-1}\},\{v_s\}, \{u_2\},\dots,\{u_{r-1}\},\{u_r\} }\}$ is a $dc$-partition
of $G$ of order $r+s-2$,  Thus, $DC(K_{r,s})\geq r+s-2$. Next, we shall show that $DC(K_{r,s})\leq r+s-2$. We proceed further with the following cases.

\nt {\bf Case 1.} Assume that $\Pi$ is a $dc$-partition of $G$ of order $r+s=n$ such that $|Vi|=1$ for $1 \leq i \leq r+s$. By Observation \ref{3}, we deduce that no two sets of $\Pi$ can form a double coalition of order 4. Hence, $\Pi$ does not exist.

\nt {\bf Case 1.} Let $\Pi$ is a $dc$-partition of $G$ of order $r+s-1=n-1$. It follows that $\Pi$ consists of a set of cardinality 2 and $r+s-3$ singleton sets. Since $\gamma_{\times 2}(K_{r,s})=4$, no two sets of $\Pi$ can be in a double coalition. Thus, $\Pi$ does not exist.\medskip

In light of the above, we conclude that $DC(K_{r,s})\leq r+s-2$. Therefore, $DC(K_{r,s})=r+s-2=n-2$.\qed

\end{proof}

 Next we determine the double coalition number of all paths and cycles.

\begin{theorem}
    
	For any path $P_n$,
		\begin{equation*}
		DC(P_n)=\left\{
		\begin{aligned}[c]
		2, & { \ \ if\ } 2\leq n\leq5 \\
		3, & { \ \ if \ } n\geq 6 \\
		\end{aligned}\right.
		\end{equation*}
	\end{theorem}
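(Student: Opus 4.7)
The plan is to bound $DC(P_n)$ from above using Theorem~\ref{thm:Delta2}, get the easy lower bound $2$ in all cases, and then handle the two regimes $n\le 5$ and $n\ge 6$ separately. Since $P_n$ is isolate-free with $\delta(P_n)=1$, Theorem~\ref{thm:Delta2} gives $DC(P_n)\le \Delta(P_n)+1$, which is $2$ when $n=2$ and at most $3$ when $n\ge 3$. For the lower bound, I take the partition $\{\{v_1\},V(P_n)\setminus\{v_1\}\}$: the first set is a singleton, hence not double dominating, and the second set fails to double dominate $v_1$ since $v_1$ has only the neighbor $v_2$, while their union is $V(P_n)$, which is double dominating because $P_n$ is isolate-free. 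Hence $DC(P_n)\ge 2$ for every $n\ge 2$, already closing the case $n=2$.

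For $n\in\{3,4,5\}$, the plan is to rule out any $dc$-partition of size $3$. The key observation is that every double dominating set of $P_n$ must contain both endpoints together with their unique neighbors (an endpoint has only one neighbor, so it can be double dominated only by lying in $D$ together with that neighbor). This forces $V$ itself to be the unique double dominating set of $P_3$ and of $P_4$, while the double dominating sets of $P_5$ are exactly $\{v_1,v_2,v_4,v_5\}$ and $V$. In any $dc$-partition $\{V_1,V_2,V_3\}$ each part needs a coalition partner, so at least two of the three unions $V_i\cup V_j$ are double dominating. For $n\in\{3,4\}$ such a union must equal $V$, forcing the remaining set to be empty. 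For $n=5$, each such union must contain $\{v_1,v_2,v_4,v_5\}$, so the complementary third part has to equal $\{v_3\}$; having two such unions would force two of the parts to both equal $\{v_3\}$, contradicting disjointness.

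For $n\ge 6$ it remains to exhibit a $dc$-partition of size $3$. I propose $\Pi=\{\{v_3\},\{v_4\},V(P_n)\setminus\{v_3,v_4\}\}$. The two singletons are not double dominating, and the third set is not double dominating because $v_3$ is outside it but has only one in-set neighbor, $v_2$. The union $\{v_3\}\cup(V(P_n)\setminus\{v_3,v_4\})=V(P_n)\setminus\{v_4\}$ is double dominating: $v_4$ has both its neighbors $v_3,v_5$ inside, while every other vertex retains a neighbor in $V(P_n)\setminus\{v_4\}$ (here I use $n\ge 6$ so that $v_5$ still has the neighbor $v_6$ in the set). By symmetry $\{v_4\}\cup(V(P_n)\setminus\{v_3,v_4\})$ is double dominating as well, so all three parts lie in some double coalition. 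The main obstacle I anticipate is the small-case analysis for $n=5$, since $P_5$ admits a proper double dominating set and one cannot rule out a $3$-partition by a plain cardinality bound as for $n=3,4$; instead one has to reason at the partition level using the simultaneous presence of two double-dominating pair-unions.
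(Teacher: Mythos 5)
Your proposal is correct, and its skeleton coincides with the paper's: the upper bound $DC(P_n)\le 3$ comes from Theorem~\ref{thm:Delta2} in both cases, and your witness partition $\{\{v_3\},\{v_4\},V\setminus\{v_3,v_4\}\}$ for $n\ge 6$ is exactly the one the paper exhibits. Where you genuinely diverge is in ruling out a $dc$-partition of size $3$ for $3\le n\le 5$. The paper enumerates the possible cardinality types of a $3$-partition ($1{+}1{+}1$, $1{+}1{+}2$, $1{+}2{+}2$, $1{+}1{+}3$) and kills each by comparing union sizes against $\gamma_{\times 2}(P_3)=3$, $\gamma_{\times 2}(P_4)=4$, $\gamma_{\times 2}(P_5)=4$, invoking in the last case the uniqueness of the $4$-element double dominating set of $P_5$. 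You instead first characterize all double dominating sets of these paths (each endpoint and its unique neighbor must lie in any such set, so $V$ is the only one for $P_3,P_4$, and $\{v_1,v_2,v_4,v_5\}$ and $V$ are the only ones for $P_5$), then argue at the partition level: at least two of the three pairwise unions must be double dominating, and the complement of each such union is a part forced to be empty (for $n=3,4$) or to equal $\{v_3\}$ (for $n=5$), contradicting disjointness. Your route is more uniform and avoids the case split on cardinality types; the paper's route is more mechanical and needs only the values of $\gamma_{\times 2}$ rather than a full description of the double dominating sets. You also make the lower bound $DC(P_n)\ge 2$ explicit with the partition $\{\{v_1\},V\setminus\{v_1\}\}$, whereas the paper delegates it to Corollary~\ref{cor1}; both are fine, and your version also settles $n=2$ directly.
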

	\begin{proof}
		Assume that $\Pi=\{V_1, V_2,\dots,V_n\}$ is a $dc$-partition of $P_n$. By Theorem \ref{thm:Delta2} and Corollary \ref{cor1}, we have $2\leq DC(P_n)\leq 3$ for any path $P_n$. It is easy to verify that for $n=2$, $DC(P_2)=2$. Now, we proceed to show that $DC(P_n) \ne 3$ where $3 \leq n\leq 5$. We consider four possible cases.

\nt {\bf Case 1.} $\Pi$ consists of three singleton sets. Since $\gamma_{\times 2}(P_3)=3$, no two singleton sets are $dc$-partners. Hence, $DC(P_3) \ne 3$.

\nt {\bf Case 2.} $\Pi$ consists of a set of cardinality 2 and two singleton sets. Since $\gamma_{\times 2}(P_4)=4$, no two sets are in $dc$-partners. Thus, $DC(P_4) \ne 3$.

\nt {\bf Case 3.} $\Pi$ consists of a singleton set, say $A$, and  two sets of cardinality 2 such as $B$ and $C$. Since $\gamma_{\times 2}(P_5)=4$, it can be seen that neither $B$ nor $C$ is able to form a double coalition with $A$. Then, $DC(P_5) \ne 3$.

\nt {\bf Case 4.} $\Pi$ consists of a set of cardinality 3,  and two singleton sets. We note that each
singleton set  must be a $dc$-partner of a set of cardinality 3, which is impossible, as $P_5$ has
a unique double dominating set of cardinality 4. Hence, $DC(P_5) \ne 3$.\medskip

Based on the analysis of all the above cases, we infer that $DC(P_n)=2$ where $3 \leq n\leq 5$.\medskip

Finally, let $n\geq 6$. By Theorem \ref{thm:Delta2}, for any path $P_n$ we have $DC(P_{n})\leq 3$. Now, we find a maximal double coalition partition of order 3 for $P_n$ where $n\ge 6$.

		$$\Pi(P_n)= \left\{V_1=\left\{V\backslash\{v_3,v_4\}\right\}, V_2=\{v_3\}, V_3=\{v_4\}\right\}.$$
  
  One can observe that $V_2$ and $V_3$ are $dc$-partners of $V_1$. Therefore, the proof is complete.\qed

	\end{proof}

Before presenting the next result, we remember the following fact.

\begin{observation}\label{4} \cite{10}
   For any cycle $C_n$, $\gamma_{\times 2}(C_n)=\lceil\frac{2n}{3}\rceil$.
\end{observation}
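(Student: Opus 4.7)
The plan is to establish both a sharp lower bound and a matching construction for $\gamma_{\times 2}(C_n)$. Label the vertices of $C_n$ cyclically as $v_0, v_1, \ldots, v_{n-1}$ and let $D$ be any double dominating set. Since every vertex of $C_n$ has exactly two neighbors, the requirement that each $v \notin D$ has two neighbors in $D$ forces \emph{both} neighbors of every non-$D$ vertex to lie in $D$; equivalently, no two consecutive vertices of the cycle may simultaneously lie outside $D$. The requirement on vertices inside $D$ rules out any vertex of $D$ that is isolated in the subgraph induced on $D$.

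Combining these two observations (and discarding the trivial case $D = V$, for which $|D| = n \geq \lceil 2n/3 \rceil$ holds automatically), $D$ decomposes around the cycle into $k \geq 1$ maximal runs of consecutive $D$-vertices, each of length at least two, separated by exactly $k$ single non-$D$ vertices. This yields the two relations $n = |D| + k$ and $|D| \geq 2k$; eliminating $|D|$ gives $k \leq \lfloor n/3 \rfloor$, hence
$$|D| = n - k \geq n - \left\lfloor \frac{n}{3} \right\rfloor = \left\lceil \frac{2n}{3} \right\rceil,$$
which supplies the lower bound $\gamma_{\times 2}(C_n) \geq \lceil 2n/3 \rceil$.

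For the matching upper bound I would exhibit an explicit double dominating set: take runs of two consecutive vertices separated by single gaps as long as the cycle permits, and lengthen one run by one or two vertices when $n \not\equiv 0 \pmod{3}$. Concretely, when $n = 3m$ use $m$ blocks of size $2$; when $n = 3m+1$ use $m-1$ blocks of size $2$ together with one block of size $3$; and when $n = 3m+2$ use either $m-1$ blocks of size $2$ and one block of size $4$, or $m-2$ blocks of size $2$ and two blocks of size $3$. A direct check confirms each such set satisfies the two conditions and has cardinality exactly $\lceil 2n/3 \rceil$.

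The main subtlety is cyclic index bookkeeping for the construction, to ensure the periodic pattern genuinely closes up on $C_n$ rather than on a path; spot-checking the small cases $n \in \{3,4,5\}$ directly is a sensible safeguard. The structural dichotomy otherwise forces the extremal configuration essentially uniquely up to rotation and redistribution of the "extra" vertices, so no further obstacle is expected.
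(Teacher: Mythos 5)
Your argument is correct. Note, however, that the paper does not prove this statement at all: it is quoted as a known result of Harary and Haynes \cite{10}, so there is no in-paper proof to compare against. What you have written is a valid self-contained derivation. The lower bound is sound: on a cycle every vertex has degree two, so a vertex outside $D$ needs both neighbours in $D$ (no two consecutive non-$D$ vertices) and a vertex of $D$ needs a $D$-neighbour (no maximal run of $D$-vertices of length one); hence $D$ splits into $k$ runs of length at least two separated by $k$ singleton gaps, giving $n=|D|+k$ and $|D|\ge 2k$, so $k\le\lfloor n/3\rfloor$ and $|D|\ge n-\lfloor n/3\rfloor=\lceil 2n/3\rceil$. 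The constructions for $n\equiv 0,1,2\pmod 3$ close up correctly on the cycle and have the right cardinalities (e.g.\ for $n=3m+1$ the set has size $2(m-1)+3=2m+1=\lceil 2n/3\rceil$), including the boundary cases $n=3,4,5$ where some of the ``blocks of size $2$'' lists are empty. In short: correct, and it supplies a proof the paper omits by citation.
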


\begin{theorem}
    For a cycle $C_n$ of order $n\geq 3$, $DC(C_n)=3$.
\end{theorem}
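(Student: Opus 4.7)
The plan is to prove $DC(C_n) = 3$ by establishing matching lower and upper bounds.

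For the lower bound $DC(C_n) \geq 3$, I would exhibit the explicit partition $\Pi = \{V_1, V_2, V_3\}$ with $V_1 = V(C_n) \setminus \{v_1, v_2\}$, $V_2 = \{v_1\}$, and $V_3 = \{v_2\}$. Three routine checks finish this part: the singletons $V_2, V_3$ are plainly not double dominating, and $V_1$ fails to double dominate $v_1$ because $v_1$ has only $v_n$ as a neighbor in $V_1$; on the other hand, $V_1 \cup V_2 = V \setminus \{v_2\}$ and $V_1 \cup V_3 = V \setminus \{v_1\}$ are each double dominating, since deleting one vertex $v$ from $C_n$ puts both neighbors of $v$ in the complementary set while leaving every other vertex with at least one cycle-neighbor inside. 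Hence $\{V_1, V_2\}$ and $\{V_1, V_3\}$ are double coalitions, so $\Pi$ is a $dc$-partition of order $3$.

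For the upper bound $DC(C_n) \leq 3$, I would argue by contradiction. Suppose $\Pi = \{V_1, \ldots, V_k\}$ is a $DC(C_n)$-partition with $k \geq 4$, and form the \emph{coalition graph} $H$ on the vertex set $\Pi$, joining $V_i$ to $V_j$ precisely when they form a double coalition. The definition of a $dc$-partition forces $\delta(H) \geq 1$, while Lemma \ref{thm:Delta1} combined with $\Delta(C_n)=2$ gives $\Delta(H) \leq 2$. Thus $H$ is a vertex-disjoint union of paths (each on at least two vertices) and cycles, with total order $k \geq 4$. A short structural check then shows that $H$ must contain two vertex-disjoint edges: if $H$ is connected then it is a path or cycle on $\geq 4$ vertices, whose matching number is $\lfloor k/2\rfloor \geq 2$; otherwise $H$ has at least two components, each contributing at least one edge.

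Fix two vertex-disjoint edges $\{V_a, V_b\}$ and $\{V_c, V_d\}$ of $H$. Since both are double coalitions, Observation \ref{4} gives $|V_a| + |V_b| \geq \lceil 2n/3 \rceil$ and $|V_c| + |V_d| \geq \lceil 2n/3 \rceil$. Because the four indices are distinct,
$$n = \sum_{i=1}^{k} |V_i| \;\geq\; (|V_a|+|V_b|) + (|V_c|+|V_d|) \;\geq\; 2\lceil 2n/3 \rceil \;\geq\; \tfrac{4n}{3},$$
which is impossible for $n\geq 1$. Hence $k \leq 3$, and combined with the lower bound we conclude $DC(C_n)=3$. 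I expect the only mildly delicate step to be the structural observation that $H$ has matching number at least two when $k \geq 4$; the rest is direct verification of the construction and a one-line arithmetic inequality driven by the value of $\gamma_{\times 2}(C_n)$.
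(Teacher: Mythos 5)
Your proof is correct and follows essentially the same route as the paper: an explicit three-set partition for the lower bound, and for the upper bound the combination of Lemma \ref{thm:Delta1} (with $\Delta(C_n)=2$) and $\gamma_{\times 2}(C_n)=\lceil 2n/3\rceil$ to produce two vertex-disjoint double coalitions whose cardinalities sum to more than $n$. If anything, your write-up is slightly more complete than the paper's, since you justify the existence of two disjoint coalition edges via the structure of the coalition graph and rule out every $k\geq 4$, whereas the paper asserts this step ``without loss of generality'' and only treats $k=4$ explicitly.
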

\begin{proof}
Let $\Pi=\{V_1, V_2,\dots,V_n\}$ be a $dc$-partition of cycle $C_n=(v_1,v_2,\dots,v_{n},v_1)$. We first show that $DC(C_n)\neq 4$. 
Suppose, to the contrary, that $DC(C_n)=4$. Let $\Pi=\{A, B, C, D\}$ be a $DC(C_n)$-partition.  
By Lemma \ref{thm:Delta1}, each set of $\Pi$ is in double coalition with at most two sets of $\Pi$.
So, without loss of generality, assume that $A$ and $B$ form a double coalition, and $C$ and $D$ form a double coalition. Since $\gamma_{\times 2}(C_n)=\lceil\frac{2n}{3}\rceil$, it holds that $|A|+|B|\geq \lceil\frac{2n}{3}\rceil$ and $|C|+|D|\geq \lceil\frac{2n}{3}\rceil$. Therefore, $|A|+|B|+|C|+|D|\geq 2\lceil\frac{2n}{3}\rceil$. On the other hand, we know that $|A|+|B|+|C|+|D|=n$. Then $n\geq 2\lceil\frac{2n}{3}\rceil$, which is a contradiction. Hence, $DC(C_n)\neq 4$ and $DC(C_n)\leq 3$. 
Now, we establish a maximal double coalition partition of order 3 for $C_n$ as follows.

 $$\Pi(C_n)= \left\{V_1=\left\{V\backslash\{v_{n-1},v_n\}\right\}, V_2=\{v_{n-1}\}, V_3=\{v_n\}\right\}.$$
 
Note that each of $V_2$ and $V_3$ form a double coalition with $V_1$.  This completes the proof. \qed

\end{proof}

\medskip

\end{document}